\documentclass[11pt]{article}

\usepackage{amsfonts}
\usepackage{bezier}
\usepackage{euscript}
\usepackage{amsfonts,amssymb,amsmath,amsbsy,amsthm,amscd}
\usepackage[T2A]{fontenc}
\usepackage[french, german, english]{babel}   

\newcommand{\cH}{\mathcal{H}}
\newcommand{\cM}{\mathcal{M}}
\newcommand{\cR}{\mathcal{R}}

\newcommand{\cP}{\mathcal{P}}
\newcommand{\cB}{\mathcal{B}}

\newcommand{\cGH}{\mathcal{GH}}

\newcommand{\dis}{{\operatorname{dis}}\,}
\newcommand{\diam}{{\operatorname{diam}}}
\renewcommand{\min}{{\operatorname{min}}}

\newcommand{\sm}{\setminus}
\newcommand{\bl}{\bigl}
\newcommand{\br}{\bigr}

\theoremstyle{plain}
\newtheorem{thm}{Theorem}
\newtheorem{prop}{Proposition}
\newtheorem{cor}{Corollary}

\theoremstyle{definition}

\renewenvironment{proof}{{\bfseries Proof.}}{}


\begin{document}

\date{}
\title{Metric Segments in Gromov--Hausdorff class.}
\author{\large{Borisova Olga Borisovna}\\
\small{Student, Lomonosov State University, Faculty of Mechanics and Mathematics}
}
\maketitle
\begin{abstract}
We study properties of metric segments in the class of all metric spaces considered up to an isometry, endowed with Gromov--Hausdorff distance. On the isometry classes of all compact metric spaces, the Gromov-Hausdorff distance is a metric. A metric segment is a class that consists of points lying between two given ones. By von Neumann--Bernays--\selectlanguage{german}{G"odel} (NBG) axiomatic set theory, a proper class is a ``monster collection'', e.g., the collection of all cardinal sets. We prove that any metric segment in the proper class of isometry classes of all metric spaces with the Gromov-Hausdorff distance is a proper class if the segment contains at least one metric space at positive distances from the segment endpoints. In addition, we show that the restriction of a non-degenerated metric segment to compact metric spaces is a non-compact set.

\begin{keywords} %
    Gromov--Hausdorff distance, class of all metric spaces, von Neumann--Bernays--\selectlanguage{german}{G"odel} axioms, metric
segment
\end{keywords}
\end{abstract}

\section{Introduction}
\markright{\thesection.~Introduction}
The Gromov-Hausdorff distance is a measure of difference between two arbitrary metric spaces. It is closely related with Hausdorff distance. The Hausdorff distance first appeared in the book of Hausdorff entitled ``Set theory''  \cite{1} in 1914. This value defines a natural distance on non-empty subsets of a metric space and generates a metric on the set of all non-empty closed bounded subsets.

In 1981, Gromov in his monograph \cite{2} introduced a distance between arbitrary metric spaces: he embedded isometrically two metric spaces into common metric spaces and considered the Hausdorff distances between their images. The infimum of the possible Hausdorff distances over all such embeddings is called the Gromov-Hausdorff distance.


It is well-known that this distance satisfies the triangle inequality and vanishes for any isometric metric spaces. It is correctly to consider the Gromov-Hausdorff distance on isometry classes of metric spaces, which can be calculated on arbitrary  representatives of the classes (it does not depend on the choice of the representatives). Since any non-empty set can be endowed with some metric (for example, one can put all non-zero distances to be equal to 1), all isometry classes do not form a set due to the famous Cantor's paradox. In order to overcome this problem, we use in this paper the von Neumann--Bernays--\selectlanguage{german}{G"odel} (NBG) axiomatic set theory \cite{2a}.


In NBG, all the objects (analogues of usual sets) are referred as classes. A class is a set if there exists a class containing this one as an element, otherwise, the class is called a proper class. The class of all sets is a proper class. For the classes many standard operations are
defined, for example, products, mappings, etc. Therefore, on proper classes, similarly with the case of sets, a distance is defined correctly. The class of isometry classes of all metric spaces (as mentioned above) is a proper class, and being endowed with the Gromov-Hausdorff distance, it is an extended pseudometric space, which we denote by $\cGH$. 

In this paper, we investigate the class of elements lying between two given points in extended pdeudometric space $\cGH$. A metric space $Z\in \cGH$ lies between $X,Y \in \cGH$ if $|XZ|+|ZY|=|XY|$, where $|\cdot \cdot |$ denotes the Gromov-Hausdorff distance. The class of all such $Z$ is called a metric segment and is denoted by $[X,Y]$. We prove constructively, that if a metric segment contains $Z\in \cGH$ at positive distances from endpoints, then this space $Z$ can be modified by adding a set with an arbitrary cardinal number such that the new metric space remains in the segment. Therefore, such a metric segment in $\cGH$ is a proper class.

The Gromov-Hausdorff distance is well studied on the set of all compact metric spaces considered up to isometry, where it is a metric. The set of all isometry classes of compact metric spaces is called the \emph{Gromov-Hausdorff space}. In 2015, A. Ivanov, N. Nikolaeva, A. Tuzhilin showed that in Gromov-Hausdorff space any two points are joined by geodesics \cite{nik}.
Therefore, any non-degenerated segment is a non-empty set.
The previous result leads to non-compactness of the metric segments in the Gromov--Hausdorff space.

I would like to express my gratitude to my advisor Dr. Sc. (Phys.-Math.) professor Tuzhilin A.A. and also to Dr. Sc. (Phys.-Math.) professor Ivanov A.O. for permanent support of my study.

The work is partly supported by RFBR (Project \textnumero 19-01-00775р).

\section{Preliminaries}
\markright{\thesection.~Preliminaries}

Let $X$ be an arbitrary set. A \emph{distance function on $X$} is any symmetric mapping $d\colon X \times X \to [0,\infty]$ vanishing at the pairs of coinciding elements. If $d$ satisfies the triangle inequality, then the mapping $d$ is called an \emph{extended pseudometric}. If in addition $d(x, y) > 0$ for all $x \neq y$, the $d$ is referred as an \emph{extended metric}.
If also $d(x, y) < \infty$ for all $x,y \in X$, then such distance function is called a $metric$, and sometimes a \emph{finite metric}
to highlight the difference from an extended metric. A set $X$ with an (extended) (pseudo-) metric is called an (\emph{extended}) (\emph{pseudo-})\emph{metric space}.
 
Let $X$ be an arbitrary metric space. The distance between points $x, y \in X$ we denote by $|xy|$. Sometimes, we add subscript $|xy|_X$ to highlight the location of the points. For any $x\in X$ and non-empty $A \subset X$, we put $|xA| = \inf \bigl\{ |xa| : a\in A\bigr\}$. Let $\cP(X)$ stand for the set of all non-empty subsets of $X$. For any $A, B \in \cP(X)$ let us put
$$
d_H(A, B) = \max\bigl\{\sup\limits_{a\in A} |aB|,\sup \limits_{b\in B}|Ab|\bigr\}.
$$
The value $d_H(A, B)$ is called \emph{the Hausdorff distance between $A$ and $B$}.

Let $\mathcal B(X) \subset \cP(X)$ be the family of all non-empty bounded subsets of $X$.

\begin{prop}[\cite{3}]\label{prop0}
The function $d_H(A, B)$ is a pseudometric on $\mathcal B(X)$.
\end{prop}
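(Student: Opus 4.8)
The plan is to verify that $d_H$ satisfies the four defining properties of a (finite) pseudometric on $\mathcal B(X)$: finiteness of its values, vanishing on the diagonal, symmetry, and the triangle inequality. Non-negativity is immediate, since $d_H$ is a maximum of suprema of the non-negative quantities $|aB|$ and $|Ab|$. Symmetry is also immediate from the definition: interchanging $A$ and $B$ merely swaps the two expressions inside the maximum. For the diagonal, I would note that for every $a \in A$ one has $|aA| = 0$, so both suprema vanish and $d_H(A,A) = 0$.

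Next I would establish finiteness, which is exactly where the boundedness of the subsets enters. Fixing $a_0 \in A$ and $b_0 \in B$, for any $a \in A$ the triangle inequality in $X$ gives $|aB| \le |ab_0| \le |aa_0| + |a_0 b_0| \le \diam A + |a_0 b_0|$, which is finite because $A$ is bounded; taking the supremum over $a$ bounds the first term inside the maximum. The symmetric estimate bounds the second term, whence $d_H(A,B) < \infty$ for all $A,B \in \mathcal B(X)$. This is the step that fails for unbounded subsets and explains the restriction to $\mathcal B(X)$.

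The main work is the triangle inequality $d_H(A,C) \le d_H(A,B) + d_H(B,C)$, and the crux is the pointwise estimate, valid for every $a \in A$:
$$
|aC| \le |aB| + \sup_{b\in B}|bC|.
$$
To see this, fix $a$ and $b \in B$; for every $c \in C$ the triangle inequality gives $|ac| \le |ab| + |bc|$, so taking the infimum over $c$ yields $|aC| \le |ab| + |bC|$. Bounding $|bC| \le \sup_{b'\in B}|b'C|$ and then taking the infimum over $b$ produces the displayed estimate. Since $|aB| \le \sup_{a'\in A}|a'B| \le d_H(A,B)$ and $\sup_{b\in B}|bC| \le d_H(B,C)$, we get $|aC| \le d_H(A,B) + d_H(B,C)$, and taking the supremum over $a \in A$ controls the first expression inside the maximum defining $d_H(A,C)$. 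The completely symmetric argument, interchanging the roles of $A$ and $C$, controls the second expression, and together they yield the claim.

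The only genuine obstacle is the triangle inequality, and within it the careful manipulation of infima and suprema in the displayed estimate; once that inequality is in hand, everything else is routine. I expect no difficulty from the extended-value convention $[0,\infty]$ on the ambient distance, since the preceding boundedness argument already confines all relevant quantities to $[0,\infty)$.
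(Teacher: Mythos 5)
Your proof is correct; the paper itself gives no argument for this proposition (it is quoted from Burago--Burago--Ivanov \cite{3}), and your verification is the standard one: the pointwise estimate $|aC|\le |aB|+\sup_{b\in B}|bC|$ for the triangle inequality, plus the observation that boundedness of $A$ and $B$ is exactly what guarantees finiteness of $d_H$ on $\mathcal B(X)$. No gaps.
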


 By  $\cH(X) \subset \mathcal B(X)$ we denote the family of all non-empty closed bounded subsets of a metric space $X$.
\begin{prop}[\cite{3}]\label{prop1}
The function $d_H(A, B)$ is a metric on $\mathcal H(X)$.
\end{prop}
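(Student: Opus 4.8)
The plan is to bootstrap from Proposition~\ref{prop0}. Since $\mathcal{H}(X) \subseteq \mathcal{B}(X)$, the fact that $d_H$ is a pseudometric on $\mathcal{B}(X)$ already supplies, by restriction, the symmetry of $d_H$, the triangle inequality, and the vanishing $d_H(A,A)=0$. Thus the only property that still requires verification in order to promote the pseudometric to a genuine metric on $\mathcal{H}(X)$ is positive definiteness: namely, that $d_H(A,B)=0$ forces $A=B$ whenever both $A$ and $B$ are closed bounded sets.

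First I would unwind the hypothesis $d_H(A,B)=0$. Because $d_H(A,B)$ is defined as the maximum of the two suprema $\sup_{a\in A}|aB|$ and $\sup_{b\in B}|Ab|$, and each quantity $|aB|$, $|Ab|$ is nonnegative, the vanishing of the maximum is equivalent to $|aB|=0$ for every $a\in A$ together with $|Ab|=0$ for every $b\in B$. Next I would translate each such condition into a topological statement: since $|aB|=\inf\{|ab|:b\in B\}$, the equality $|aB|=0$ says precisely that $a$ is approximated arbitrarily well by points of $B$, i.e.\ $a$ lies in the closure $\overline{B}$. Here the closedness of $B$ enters, giving $\overline{B}=B$ and hence $a\in B$; as this holds for all $a\in A$, we obtain $A\subseteq B$. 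The symmetric argument applied to $|Ab|=0$, using that $A$ is closed, yields $B\subseteq A$, and therefore $A=B$.

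The main (and essentially only) obstacle is the closure step, and it is instructive to pin down exactly why it works here and not on $\mathcal{B}(X)$: without the closedness assumption one can only conclude $A\subseteq\overline{B}$ and $B\subseteq\overline{A}$, which does not force $A=B$. This is precisely the gap that makes $d_H$ merely a pseudometric on $\mathcal{B}(X)$ while it becomes a metric on $\mathcal{H}(X)$, so I would state this contrast explicitly to explain the role of the hypothesis. Everything else is routine inheritance from Proposition~\ref{prop0}, so no further computation is needed.
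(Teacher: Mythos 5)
Your argument is correct and is the standard one (the paper itself states this proposition with only a citation to \cite{3}, where the proof proceeds exactly as you describe): inherit the pseudometric axioms from Proposition~\ref{prop0} by restriction, and use closedness of $A$ and $B$ to upgrade $d_H(A,B)=0$ to $A\subseteq\overline{B}=B$ and $B\subseteq\overline{A}=A$. Your closing remark correctly identifies closedness as the exact hypothesis that fails on $\mathcal{B}(X)$, so nothing is missing.
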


 Let $X$ and $Y$ be metric spaces. A triple $(X', Y', Z)$ consisting of a metric space $Z$ and its subsets $X'$ and $Y'$ isometric to $X$ and $Y$, respectively, is called a \emph{realization of the pair $(X,Y)$}. For any metric spaces $X, Y$, the value
$$
d_{GH}(X,Y) = \inf\bigl\{r : \exists \, (X', Y', Z),\ d_H(X', Y') \leq r\bigr\}
$$
 is called \emph{the Gromov--Hausdorff distance between $X$ and $Y$}.

\begin{prop}[\cite{3}]
The function $d_{GH}(X, Y)$ satisfies the triangle inequality. If $X$ and $Y$ are isometric metric spaces, then $d_{GH}(A,B) = 0$.
\end{prop}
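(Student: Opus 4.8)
The second assertion is immediate. If $X$ and $Y$ are isometric, fix an isometry and take $Z = X$ with $X' = X$ and $Y'$ the image of the isometry, which coincides with $X$ as a subset of $Z$. Then $(X', Y', Z)$ is a realization of $(X, Y)$ with $d_H(X', Y') = d_H(X, X) = 0$, so the infimum defining $d_{GH}(X, Y)$ equals $0$.

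The triangle inequality is the substantial part. Fix metric spaces $X, Y, W$ and $\epsilon > 0$. By definition of the infimum I would choose a realization $(X', Y', Z_1)$ of the pair $(X, Y)$ with $d_H(X', Y') \le d_{GH}(X, Y) + \epsilon$, and a realization $(Y'', W'', Z_2)$ of the pair $(Y, W)$ with $d_H(Y'', W'') \le d_{GH}(Y, W) + \epsilon$. Since $Y'$ and $Y''$ are both isometric to $Y$, there is an isometry $\varphi \colon Y' \to Y''$ (the composition of the two identifications through $Y$). The plan is to glue $Z_1$ and $Z_2$ along this common copy of $Y$ and then estimate the Hausdorff distance between the resulting images of $X$ and $W$.

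Concretely, form the disjoint union of $Z_1$ and $Z_2$ and identify each $y' \in Y'$ with $\varphi(y') \in Y''$. On the quotient set $Z$ I would retain the original distances inside $Z_1$ and inside $Z_2$, and for $p \in Z_1$, $q \in Z_2$ set
$$
|pq|_Z = \inf_{y' \in Y'} \bl( |p\,y'|_{Z_1} + |\varphi(y')\,q|_{Z_2} \br).
$$
This function is symmetric and vanishes exactly where forced by the identification. The one nontrivial point is the triangle inequality, which must be checked for all distributions of the three points among $Z_1$ and $Z_2$; the cross-component cases reduce, after inserting an intermediate point of the glued copy of $Y$, to the triangle inequalities already available in $Z_1$ and $Z_2$ together with the infimum above. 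This routine but slightly tedious verification is the main (indeed essentially the only) obstacle. If the resulting distance is merely a pseudometric I would pass to the metric quotient; the images of $X'$ and $W''$ survive as genuine isometric copies, since distinct points of $X'$ (respectively $W''$) keep their positive $Z_1$- (respectively $Z_2$-) distance.

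Granting that $(X', W'', Z)$ is a realization of $(X, W)$, I would finish by bounding its Hausdorff distance. For $x' \in X'$ pick $y' \in Y'$ with $|x'y'|_{Z_1} \le d_H(X', Y')$ and then $w'' \in W''$ with $|\varphi(y')\,w''|_{Z_2} \le d_H(Y'', W'')$; the glued triangle inequality gives $|x'w''|_Z \le d_H(X', Y') + d_H(Y'', W'')$, whence $\sup_{x'\in X'} |x' W''|_Z$ is bounded by this sum, and symmetrically for $\sup_{w''\in W''} |X' w''|_Z$. Therefore $d_{GH}(X, W) \le d_H(X', W'') \le d_H(X', Y') + d_H(Y'', W'') \le d_{GH}(X, Y) + d_{GH}(Y, W) + 2\epsilon$, and letting $\epsilon \to 0$ yields the triangle inequality.
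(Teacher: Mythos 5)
The paper does not prove this proposition at all: it is quoted from Burago--Burago--Ivanov \cite{3} without argument. Your gluing construction is precisely the standard proof given there, and it is correct; the only cosmetic slip is that $|x'Y'|_{Z_1}\leq d_H(X',Y')$ is an infimum, so you can only choose $y'\in Y'$ with $|x'y'|_{Z_1}\leq d_H(X',Y')+\epsilon'$ for arbitrary $\epsilon'>0$, which does not affect the final bound after taking suprema and letting $\epsilon'\to 0$.
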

 Notice, that the Gromov--Hausdorff distance could take infinite values, and there are some examples of non-isometric metric spaces $X, Y$ for which $d_{GH}(X,Y) = 0$, see \cite{3}. So, the Gromov--Hausdorff distance is well-defined  on  isometry classes of metric spaces (it does not depend on the choice of representatives in the classes).

Since any non-empty set can be endowed with some metric (for example, one can put all non-zero distances to be equal to 1), so there are ``as many'' isometry classes of metric spaces as all possible sets, i.e., the family of isometry classes is not a set, but it is a proper class which, together with the Gromov--Hausdorff distance, is denoted by $\cGH$. Here we use the concept of class in the sense of
von Neumann--Bernays--G"odel Set Theory (NBG). Recall some concepts of the NBG.

In NBG, all the objects (analogues of usual sets) are referred as \emph{classes}. There are two types of classes: the \emph{sets} and the \emph{proper classes}.
 A class $\mathcal A$ is called a \emph{set} if there exists a class $\mathcal C$ such that $\mathcal A \in C$.
 A class $\mathcal A$ is called a \emph{proper class} if for any class $\mathcal C$ it holds $\mathcal A \not\in C$.
\begin{prop}[\cite{2a}]
The class of all sets $\mathcal V = \{ \mathcal A:\mathcal A = \mathcal A\}$ is a proper class.
\end{prop}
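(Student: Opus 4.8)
The plan is to argue by contradiction, assuming that $\mathcal{V}$ is a set and deriving the Russell antinomy. First I would invoke the class existence (comprehension) theorem of NBG to form the class $\cR = \{x : x \notin x\}$; this is legitimate because the defining predicate $x \notin x$ contains no quantifiers ranging over classes, so it is predicative and $\cR$ is a bona fide class. Since in NBG only sets can occur as members of a class, every element of $\cR$ is a set, whence $\cR \subseteq \mathcal{V}$.

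Next, under the contrary assumption that $\mathcal{V}$ is a set, I would use the NBG principle that every subclass of a set is again a set to conclude that $\cR$ is itself a set, and therefore $\cR \in \mathcal{V}$. Now I would examine the dichotomy $\cR \in \cR$ versus $\cR \notin \cR$. If $\cR \in \cR$, then by the defining property we get $\cR \notin \cR$; conversely, if $\cR \notin \cR$, then $\cR$ is a set satisfying the defining condition, so $\cR \in \cR$. Both horns are contradictory, so the assumption fails and $\mathcal{V}$ cannot be a set. By the definition of a proper class recalled just above, this means $\mathcal{V} \notin \mathcal{C}$ for every class $\mathcal{C}$, i.e., $\mathcal{V}$ is a proper class, as claimed.

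An alternative route, closer to the Cantor's paradox already cited in the introduction, is the following: if $\mathcal{V}$ were a set, then its power set $\cP(\mathcal{V})$ would be a set, and since every subset of $\mathcal{V}$ is in particular a set we would have $\cP(\mathcal{V}) \subseteq \mathcal{V}$, forcing $|\cP(\mathcal{V})| \le |\mathcal{V}|$ and contradicting Cantor's theorem $|\cP(\mathcal{V})| > |\mathcal{V}|$. I expect that there is no genuine obstacle here; the only points demanding care are purely formal, namely the justification that $\cR$ is a legitimate class (via the comprehension scheme for predicative formulas) and, once $\mathcal{V}$ is assumed to be a set, that the subclass $\cR$ inherits sethood (via the subclass axiom of NBG). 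Both are immediate consequences of the axioms cited in \cite{2a}, so the statement follows at once.
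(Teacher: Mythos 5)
Your proof is correct; the paper itself gives no proof of this proposition, stating it as a known fact imported from \cite{2a}, and your primary argument (form the Russell class $\cR=\{x: x\notin x\}$ by predicative comprehension, note $\cR\subseteq\mathcal V$, and use the NBG theorem that a subclass of a set is a set to derive the contradiction $\cR\in\cR\Leftrightarrow\cR\notin\cR$) is exactly the standard one found in that reference. Your alternative via Cantor's theorem also matches the informal justification the paper gives in its introduction, so there is nothing to object to.
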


For all classes $\mathcal X, \mathcal Y$ it is defined $\mathcal X \times \mathcal X$, $f\colon \mathcal X \to \mathcal Y$, etc., in particular, we can speak about distance function on a class.
Therefore, the distance on $\cGH$ is defined correctly.
The class of isometry classes of bounded metric spaces is a proper class too, and being endowed with the Gromov-Hausdorff distance, it is denoted by $\cB$.

\begin{prop}[\cite{3}]\label{prop2a}
The function $d_{GH}(A, B)$ is a pseudometric on $\cB$.
\end{prop}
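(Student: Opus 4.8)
The plan is to observe that a pseudometric is precisely a finite-valued extended pseudometric, so almost all the required properties are already available from earlier results and only finiteness genuinely needs an argument. Non-negativity of $d_{GH}$ is immediate, since it is the infimum of a set of non-negative numbers $r$. The identity $d_{GH}(A,A)=0$ follows from the earlier proposition stating that $d_{GH}$ vanishes on isometric spaces, because every metric space is isometric to itself. Symmetry holds because a realization $(X',Y',Z)$ of the pair $(X,Y)$ is simultaneously a realization $(Y',X',Z)$ of the pair $(Y,X)$, while $d_H$ is itself symmetric; hence the infima defining $d_{GH}(X,Y)$ and $d_{GH}(Y,X)$ coincide. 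The triangle inequality is exactly the content of that same earlier proposition. Consequently the only new statement to be verified is that $d_{GH}(A,B)<\infty$ for all $A,B\in\cB$.

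To establish finiteness I would exhibit, for arbitrary bounded representatives $X$ and $Y$, a single realization of the pair $(X,Y)$ with finite Hausdorff distance. First I would form the disjoint union $Z=X\sqcup Y$, retaining the original distances inside $X$ and inside $Y$. Then I would glue the two pieces by declaring every cross-distance $|xy|_Z$ with $x\in X$ and $y\in Y$ to equal a single constant $r=\tfrac{1}{2}\max\{\diam X,\diam Y\}$, which is finite precisely because $X$ and $Y$ are bounded. In this realization the embedded copies are $X$ and $Y$ themselves, every point of one factor lies at distance exactly $r$ from every point of the other, so $d_H(X,Y)=r$, and therefore $d_{GH}(X,Y)\le r<\infty$.

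The step that requires the most care is checking that the glued distance function on $Z$ really satisfies the triangle inequality, and this is what dictates the choice of $r$. The only nontrivial triples have two points in one factor and one in the other: for $x,x'\in X$ and $y\in Y$ one needs $|xx'|_X\le 2r$, and symmetrically for $x\in X$ and $y,y'\in Y$ one needs $|yy'|_Y\le 2r$, both of which hold by the definition of $\diam$ as soon as $r\ge\tfrac{1}{2}\max\{\diam X,\diam Y\}$; the remaining mixed inequalities, of the form $r\le|xx'|_X+r$, are automatic. The sole degenerate case is $\diam X=\diam Y=0$, when each space is a single point and the two are isometric, so $d_{GH}(X,Y)=0$ by the vanishing property already recorded. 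Gathering these facts shows that $d_{GH}$ is non-negative, symmetric, vanishes on coinciding classes, satisfies the triangle inequality, and takes finite values on $\cB$, which is exactly the claim that it is a pseudometric there.
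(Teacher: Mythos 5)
Your argument is correct. Note, however, that the paper does not prove this proposition at all: it is quoted from Burago--Burago--Ivanov \cite{3} as a known fact, so there is no internal proof to compare against. Your reduction (symmetry, vanishing on isometric representatives, and the triangle inequality are already recorded; only finiteness on $\cB$ needs work) is the right one, and your finiteness argument via the disjoint union with constant cross-distance $r=\tfrac12\max\{\diam X,\diam Y\}$ is exactly the standard construction, giving the classical bound $d_{GH}(X,Y)\le\tfrac12\max\{\diam X,\diam Y\}$; your verification of the triangle inequality in the glued space and your separate treatment of the degenerate case $\diam X=\diam Y=0$ (where $r=0$ would fail to yield a metric on the union) are both sound.
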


The most well-investigated subset of $\cGH$ is the set $\cM$ of isometry classes of compact metric spaces.

\begin{prop}[\cite{3}]\label{prop2}
The function $d_{GH}(A, B)$ is a metric on $\cM$.
\end{prop}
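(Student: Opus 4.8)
The plan is to check the metric axioms and to observe that all of them, except the identity of indiscernibles, are already in hand. Nonnegativity and symmetry are immediate from the definition of $d_{GH}$ through realizations, since swapping $X'$ and $Y'$ in a realization leaves $d_H(X',Y')$ unchanged by the symmetry of $d_H$ (Proposition~\ref{prop0}), and $d_{GH}(X,X)=0$ is clear. The triangle inequality is the content of the preceding proposition. Finiteness holds because every compact space is bounded: for bounded $X,Y$ one realizes them on the disjoint union $X\sqcup Y$ with all cross-distances set to a single large constant, which gives $d_{GH}(X,Y)\le\tfrac12\max\{\diam X,\diam Y\}<\infty$. Thus $\cM\subset\cB$ and $d_{GH}$ is already a pseudometric on $\cM$ by Proposition~\ref{prop2a}, so the entire content of the statement is the implication: for compact $X,Y$, the condition $d_{GH}(X,Y)=0$ forces $X$ and $Y$ to be isometric.

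To prove this implication I would first translate the hypothesis into almost-isometries. Since $d_{GH}(X,Y)=0$, for each $n$ there is a realization with $d_H(X',Y')\le\varepsilon_n$, where $\varepsilon_n\to0$; identifying $X,Y$ with $X',Y'$ inside the ambient space and assigning to each $x\in X$ a point $f_n(x)\in Y$ within $\varepsilon_n$ of it, one obtains maps $f_n\colon X\to Y$ that are almost distance-preserving, namely $\bigl||f_n(x)f_n(x')|_Y-|xx'|_X\bigr|\le2\varepsilon_n$ for all $x,x'$. (Equivalently, one may phrase the whole argument through correspondences whose distortion tends to $0$.)

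The core step is a compactness and diagonal extraction. Fix a countable dense subset $\{x_i\}_{i\in\N}$ of $X$, which exists because a compact metric space is separable. As $Y$ is compact, each sequence $\bigl(f_n(x_i)\bigr)_n$ has a convergent subsequence, and a diagonal argument produces a single subsequence along which $f_n(x_i)\to f(x_i)$ for every $i$. Passing to the limit in the estimate above yields $|f(x_i)f(x_j)|_Y=|x_ix_j|_X$, so $f$ is distance-preserving on the dense set, hence $1$-Lipschitz, and extends uniquely to a distance-preserving map $f\colon X\to Y$. Exchanging the roles of $X$ and $Y$ gives a distance-preserving map $g\colon Y\to X$ as well.

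It remains to promote these isometric embeddings to a genuine isometry, and this surjectivity step is where I expect the only real difficulty to lie. I would invoke the classical lemma that a distance-preserving self-map of a compact metric space is surjective, applied to $f\circ g\colon Y\to Y$: it follows that $f$ is onto, and a distance-preserving surjection is an isometry. The lemma is proved by contradiction: if $f\circ g$ omitted a point $a$, then, its image being compact and hence closed, one would have $\delta:=|a,(f\circ g)(Y)|>0$, whereas the orbit $(f\circ g)^k(a)$ has all pairwise distances $\ge\delta$, contradicting the sequential compactness of $Y$. This proves the identity of indiscernibles, and together with the axioms recorded above it shows that $d_{GH}$ is a metric on $\cM$.
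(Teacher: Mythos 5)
Your proof is correct, and it is essentially the standard argument from the cited reference \cite{3} (the paper itself states this proposition without proof): reduce to the identity of indiscernibles, extract a limiting distance-preserving map via a countable dense subset and a diagonal argument, and conclude surjectivity from the fact that a distance-preserving self-map of a compact metric space is onto.
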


Now we describe a convenient way to calculate the Gromov--Hausdorff distance. Let $X$ and $Y$ be non-empty sets. Put $\cP(X, Y ) = \cP(X\times Y )$. Each element of $\cP(X, Y )$ is called  a \emph{relation between $X$ and $Y$}.

Let $\pi_X \colon X\times Y \to X$ and $\pi_Y \colon X\times Y \to Y$ be the canonical projections $\pi_X(x, y) = x$ and $\pi_Y (x, y) = y$. A relation $\sigma \in \cP(X, Y )$
is called a \emph{correspondence}, if the restrictions of $\pi_X$ and $\pi_Y$ onto $\sigma$ are surjections. The set of all correspondences between $X$ and $Y$ is denoted by $\cR(X, Y )$.

For any metric spaces $X$ and $Y$ and for any relation $\sigma \in \cP(X, Y )$, we define its \emph{distortion} as follows:

$$
\dis \sigma = \sup\Bigl\{\bigl||xx^\prime|-|yy^\prime|\bigr|:(x, y),\ (x^\prime, y^\prime)\in \sigma\Bigr\}.
$$

\begin{prop}[\cite{3}]\label{prop3}
Let $X$ and $Y$ be metric spaces. Then
$$
d_{GH}(X, Y)= \frac 1 2 \inf \bigl\{ \dis R:R\in \cR(X,Y)\bigr\}.
$$
\end{prop}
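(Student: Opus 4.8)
The plan is to establish the identity by proving the two inequalities separately. For the bound $d_{GH}(X,Y)\le \frac12\inf\bl\{\dis R:R\in\cR(X,Y)\br\}$, I would fix an arbitrary correspondence $R\in\cR(X,Y)$, set $r=\frac12\dis R$, and build a single metric space $Z$ realizing $X$ and $Y$ at Hausdorff distance at most $r$. Take $Z$ to be the disjoint union $X\sqcup Y$, keep the original metrics on each piece, and for $x\in X$, $y\in Y$ define
$$
|xy|_Z=\inf\bl\{|xa|_X+r+|by|_Y:(a,b)\in R\br\}.
$$
Since $R$ is a correspondence, every $x\in X$ admits some $b$ with $(x,b)\in R$, whence $|xY|_Z\le r$, and symmetrically $|Xy|_Z\le r$; this gives $d_H(X,Y)\le r$ at once, and hence $d_{GH}(X,Y)\le r=\frac12\dis R$. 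Taking the infimum over $R$ then yields the desired inequality, provided $|\cdot\cdot|_Z$ is genuinely a metric.

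Verifying that $|\cdot\cdot|_Z$ satisfies the triangle inequality is the technical core. The inequalities for triples lying entirely in $X$ or entirely in $Y$ are inherited from the original metrics, so only the mixed triples require attention. The case of two points $x_1,x_2\in X$ together with one point $y\in Y$ splits into two subcases. The inequality $|x_1y|_Z\le|x_1x_2|_X+|x_2y|_Z$ follows immediately from the triangle inequality $|x_1a|_X\le|x_1x_2|_X+|x_2a|_X$ applied inside the infimum. The delicate subcase is $|x_1x_2|_X\le|x_1y|_Z+|x_2y|_Z$: here I would pick $(a,b),(a',b')\in R$ nearly attaining the two infima and estimate $|x_1x_2|_X\le|x_1a|_X+|aa'|_X+|a'x_2|_X$, then replace $|aa'|_X$ by $|bb'|_Y+\dis R\le|by|_Y+|b'y|_Y+2r$ using the definition of the distortion and the triangle inequality in $Y$. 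Collecting terms reproduces exactly $(|x_1a|_X+r+|by|_Y)+(|x_2a'|_X+r+|b'y|_Y)$, so passing to the infimum gives the claim; the mixed triples with two points in $Y$ are symmetric. Finally, when $r>0$ every cross-distance is at least $r>0$, so $|\cdot\cdot|_Z$ is a bona fide metric; the degenerate case $\dis R=0$ forces $R$ to be a distance-preserving bijection, hence $X$ and $Y$ are isometric and $d_{GH}(X,Y)=0$, so the inequality holds trivially there.

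For the reverse bound $\frac12\inf\bl\{\dis R:R\in\cR(X,Y)\br\}\le d_{GH}(X,Y)$, I would start from an arbitrary realization $(X',Y',Z)$, write $r=d_H(X',Y')$, and manufacture a correspondence of small distortion. Identifying $X$ with $X'$ and $Y$ with $Y'$, set
$$
R=\bl\{(x,y)\in X'\times Y':|xy|_Z\le r+\varepsilon\br\}
$$
for a fixed $\varepsilon>0$. Because $d_H(X',Y')=r$, every $x\in X'$ has a point $y\in Y'$ within $r+\varepsilon$ and vice versa, so $R$ is indeed a correspondence. For $(x,y),(x',y')\in R$ the triangle inequality in $Z$ gives $|xx'|_Z\le|xy|_Z+|yy'|_Z+|y'x'|_Z\le|yy'|_Z+2(r+\varepsilon)$, and symmetrically, so $\dis R\le 2(r+\varepsilon)$. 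Hence $\inf\dis R\le 2(r+\varepsilon)$ for every $\varepsilon>0$, giving $\frac12\inf\dis R\le r=d_H(X',Y')$; taking the infimum over all realizations then yields $\frac12\inf\dis R\le d_{GH}(X,Y)$.

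I expect the triangle inequality for the glued metric $|\cdot\cdot|_Z$, and in particular the subcase $|x_1x_2|_X\le|x_1y|_Z+|x_2y|_Z$, to be the main obstacle: it is the only place where the distortion bound $\dis R=2r$ is genuinely used, and it requires converting an $X$-distance $|aa'|_X$ into a $Y$-estimate through the correspondence before routing it via the common point $y$. Everything else is either inherited from the given metrics or a direct consequence of the Hausdorff-distance definition.
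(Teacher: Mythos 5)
Your argument is correct and complete: the gluing construction $|xy|_Z=\inf\{|xa|_X+r+|by|_Y:(a,b)\in R\}$ for the upper bound and the thresholded relation $\{(x,y):|xy|_Z\le r+\varepsilon\}$ for the lower bound, together with your careful treatment of the mixed triangle inequality and of the degenerate case $\dis R=0$, constitute the standard proof. The paper itself states this proposition without proof, citing Burago--Burago--Ivanov, and your argument is essentially the one given there, so there is nothing to add.
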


A correspondence $R \in \cR(X,Y)$ is called \emph{optimal} if $d_{GH}(X,Y) = \frac 1 2\dis R$. The set of all optimal correspondences between $X$ and $Y$ we denote by $\cR_{\operatorname{opt}}(X,Y)$. Let $\cR_c(X, Y)$ be the set of all correspondences closed in $X\times Y$.

\begin{prop}[\cite{5}]\label{prop4}
For $X, Y \in \cM$ it holds $\cR_{\operatorname{opt}}(X, Y)\cap \cR_c(X, Y) \neq \emptyset$.
\end{prop}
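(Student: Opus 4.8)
The plan is to realize the optimal correspondence as a Hausdorff limit of a minimizing sequence, exploiting compactness of the space of closed subsets of the compact product $X\times Y$. First I would observe that passing to closures costs nothing: for any $R\in\cR(X,Y)$ its closure $\overline R$ in $X\times Y$ is again a correspondence, since $\pi_X(\overline R)\supseteq\pi_X(R)=X$ and likewise for $\pi_Y$, and $\dis\overline R=\dis R$, because the function $(x,y,x^\prime,y^\prime)\mapsto\bigl||xx^\prime|-|yy^\prime|\bigr|$ is continuous and the supremum of a continuous function over a set equals its supremum over the closure. Hence by Proposition \ref{prop3} we have $2\,d_{GH}(X,Y)=\inf\bigl\{\dis R:R\in\cR_c(X,Y)\bigr\}$, so it suffices to find a \emph{closed} correspondence attaining this infimum.

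Next I would choose a minimizing sequence $R_n\in\cR_c(X,Y)$ with $\dis R_n\to 2\,d_{GH}(X,Y)=:2d$. Since $X$ and $Y$ are compact, so is $X\times Y$, and therefore the family $\cH(X\times Y)$ of non-empty closed subsets, equipped with the Hausdorff metric (Proposition \ref{prop1}), is compact by the Blaschke selection theorem. Passing to a subsequence I may assume $R_n\to R$ in this Hausdorff metric, where $R\in\cH(X\times Y)$ is a non-empty closed subset of $X\times Y$.

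It remains to check that $R$ is an optimal correspondence. To see that the projections of $R$ are surjective, fix $x\in X$; for each $n$ the surjectivity of $\pi_X$ on $R_n$ gives $y_n$ with $(x,y_n)\in R_n$, and compactness of $Y$ lets me extract $y_{n_k}\to y$. Then $(x,y_{n_k})\in R_{n_k}$ and $(x,y_{n_k})\to(x,y)$, so Hausdorff convergence forces $(x,y)\in R$; thus $x\in\pi_X(R)$, and symmetrically $\pi_Y(R)=Y$, so $R\in\cR_c(X,Y)$. For the distortion, take any $(x,y),(x^\prime,y^\prime)\in R$; Hausdorff convergence supplies $(x_n,y_n),(x^\prime_n,y^\prime_n)\in R_n$ with $(x_n,y_n)\to(x,y)$ and $(x^\prime_n,y^\prime_n)\to(x^\prime,y^\prime)$, whence
$$
\bigl||xx^\prime|-|yy^\prime|\bigr|=\lim_n\bigl||x_nx^\prime_n|-|y_ny^\prime_n|\bigr|\le\liminf_n\dis R_n=2d.
$$
Taking the supremum over all such pairs gives $\dis R\le 2d$, while $\dis R\ge 2d$ since $R$ is a correspondence; hence $\dis R=2d$ and $R\in\cR_{\operatorname{opt}}(X,Y)\cap\cR_c(X,Y)$.

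The main obstacle is the limiting argument of the last paragraph: one must ensure that Hausdorff convergence of the sets $R_n$ genuinely transfers both the surjectivity of the projections and the distortion bound to $R$. Both facts rely essentially on the compactness of $X$ and $Y$ (to produce convergent subsequences of the witnessing points $y_n$ and of the pairs realizing distances), so the argument does not extend verbatim beyond $\cM$; the continuity of $(x,y,x^\prime,y^\prime)\mapsto\bigl||xx^\prime|-|yy^\prime|\bigr|$ is what upgrades pointwise convergence into the needed control of the supremum defining $\dis R$.
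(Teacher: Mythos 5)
The paper states this proposition without proof, citing it from \cite{5}; your argument is correct and is essentially the standard proof given there: reduce to closed correspondences (closure preserves surjectivity of the projections and the distortion, since $\dis$ is a supremum of a continuous function), use compactness of $\cH(X\times Y)$ in the Hausdorff metric to extract a limit $R$ of a minimizing sequence, and verify that surjectivity and the distortion bound pass to the limit. All steps check out, including the two delicate ones — that a Hausdorff limit of correspondences is a correspondence (via compactness of $Y$ to produce the witness $y$) and that $\dis R\le\liminf_n\dis R_n$ — so nothing further is needed.
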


Let $X, Y$ be arbitrary metric spaces. For each $t \in (0, 1)$ we define a distance function on $X \times Y$ as
$\bl|(x, y)(x^\prime, y^\prime)\bigr|_t = (1 - t)|xx^\prime| + t|yy^\prime|$.

\begin{prop}[\cite{5}]\label{prop5}
The function defined above $|\cdot|_t$ is a metric for any $t \in (0, 1)$ on $X \times Y$.
\end{prop}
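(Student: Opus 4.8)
The plan is to verify directly the four defining properties of a metric for the distance function $\bl|(x,y)(x',y')\br|_t = (1-t)|xx'| + t|yy'|$, keeping in mind that for $t \in (0,1)$ both weights $1-t$ and $t$ are strictly positive. Since $X$ and $Y$ are metric spaces in the sense of the Preliminaries (so their metrics are finite), the quantities $|xx'|$ and $|yy'|$ are finite, and hence the convex combination takes values in $[0,\infty)$; non-negativity is then clear because each summand is a positive coefficient times a non-negative distance, and symmetry follows at once from the symmetry of the metrics on $X$ and $Y$.

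The only property that genuinely uses the restriction $t \in (0,1)$ is the identity of indiscernibles. If $(x,y)=(x',y')$, then both distances vanish, so $\bl|(x,y)(x',y')\br|_t = 0$. Conversely, suppose $(1-t)|xx'| + t|yy'| = 0$. As both terms are non-negative and the coefficients are strictly positive, each distance must vanish, whence $x = x'$ and $y = y'$ because the distance functions on $X$ and $Y$ are metrics. This is precisely the step that would fail for $t \in \{0,1\}$, where $|\cdot|_t$ degenerates to a pseudometric.

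For the triangle inequality I would argue by linearity: given a third point $(x'',y'')$, apply the triangle inequalities $|xx''| \leq |xx'| + |x'x''|$ in $X$ and $|yy''| \leq |yy'| + |y'y''|$ in $Y$, multiply them by $1-t$ and $t$ respectively, and add; regrouping the right-hand side yields exactly $\bl|(x,y)(x',y')\br|_t + \bl|(x',y')(x'',y'')\br|_t$. No step poses a real difficulty here — the whole argument is a routine reduction to the axioms of the two coordinate metrics — and the single subtlety worth flagging is the essential use of the strict positivity of both weights in the identity of indiscernibles, which is exactly why the statement is confined to the open interval $t \in (0,1)$.
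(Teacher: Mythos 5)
Your verification is correct and is exactly the standard argument: non-negativity, symmetry, and the triangle inequality reduce linearly to the corresponding axioms in $X$ and $Y$, and the identity of indiscernibles uses the strict positivity of both weights $1-t$ and $t$, which is precisely why $t$ is restricted to $(0,1)$. The paper itself states this proposition without proof, citing \cite{5}, so there is nothing to contrast with; your direct check is the proof one would expect to find there.
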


For any $\sigma \in \cP(X, Y )$, the metric space $\bigl(\sigma, |\cdot|_t\bigr)$ is denoted by $\sigma_t$.

\begin{prop}[\cite{5}]\label{prop6}
For any $X, Y \in \cM$ and any closed $\sigma \in \cP(X, Y )$, the metric space
$\sigma_t$ belongs to $\cM$ for all $t \in (0, 1)$.
\end{prop}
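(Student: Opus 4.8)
The plan is to show that $\sigma_t$ is compact by identifying its topology with the subspace topology inherited from the product $X \times Y$, in which $\sigma$ sits as a closed subset of a compact space. Since compactness is what membership in $\cM$ requires, and Proposition~\ref{prop5} already guarantees that $|\cdot|_t$ is a genuine metric, the whole task reduces to a comparison of topologies.

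First I would observe that, as $X, Y \in \cM$ are compact, the product $X \times Y$ endowed with the sum metric $\rho\bl((x,y),(x',y')\br) = |xx'| + |yy'|$ is compact, being a finite product of compact metric spaces. The relation $\sigma$ is closed in $X \times Y$ by hypothesis, hence it is a closed subset of a compact space and is therefore compact in the subspace topology induced by $\rho$. Also $\sigma$ is non-empty by the definition of $\cP(X,Y)$, so $\sigma_t$ is a non-empty space.

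Next comes the key point: for each fixed $t \in (0,1)$ the metric $|\cdot|_t$ is bi-Lipschitz equivalent to $\rho$ on $X \times Y$. Writing $m = \min\{t, 1-t\} > 0$ and $M = \max\{t, 1-t\}$, the elementary two-sided estimate
$$
m\,\rho\bl((x,y),(x',y')\br) \le \bl|(x,y)(x',y')\br|_t \le M\,\rho\bl((x,y),(x',y')\br)
$$
holds for all pairs, precisely because both weights $1-t$ and $t$ are strictly positive. Consequently $|\cdot|_t$ and $\rho$ generate the same topology on $X \times Y$, and hence the same topology on $\sigma$.

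Therefore $\sigma_t = (\sigma, |\cdot|_t)$ is homeomorphic to $\sigma$ equipped with the $\rho$-subspace topology, which we have already seen is compact; thus $\sigma_t$ is a non-empty compact metric space and belongs to $\cM$. (Equivalently, one may argue sequentially: given a sequence in $\sigma$, compactness of $X$ and then of $Y$ yields a subsequence whose $X$- and $Y$-coordinates both converge, its limit lies in $\sigma$ since $\sigma$ is closed, and convergence in $\rho$ coincides with convergence in $|\cdot|_t$ by the estimate above.) I do not expect a serious obstacle here: the only non-trivial ingredient is the routine verification of the topological equivalence of the two metrics, after which compactness is immediate from the closedness of $\sigma$ in the compact space $X \times Y$.
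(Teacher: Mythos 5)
Your argument is correct: the bi-Lipschitz comparison between $|\cdot|_t$ and the sum metric on $X\times Y$ (valid precisely because $t$ and $1-t$ are both positive), together with the closedness of $\sigma$ in the compact product, immediately gives compactness of $\sigma_t$. The paper states this proposition as a cited result from \cite{5} and gives no proof of its own, so there is nothing to compare against; your proof is the standard one and has no gaps.
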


Let us choose an arbitrary $R \in \cR(X, Y )$ and extend the family $R_t, t \in (0, 1)$, up to $t = 0, 1$,
as $R_0 = X$ and $R_1 = Y$.

\begin{prop}[\cite{5}]\label{prop7}
For any $X,Y \in \cM$ and $R \in \cR_{opt}(X, Y ) \cap \cR_c(X, Y )$, the mapping $t \mapsto R_t, t \in [0, 1]$, is a curve connecting $X$ and $Y$. This curve is a shortest curve, and its length equals $d_{GH}(X, Y )$. Therefore, the metric of the space $\cM$ is geodesic.
\end{prop}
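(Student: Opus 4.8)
The plan is to exploit the fact that all the spaces $R_t$, $t\in(0,1)$, share the same underlying set $R$, so that the identity map furnishes a canonical correspondence between any two of them. First I would fix $s,t\in(0,1)$ and consider the diagonal correspondence $\operatorname{id}=\{(p,p):p\in R\}\in\cR(R_s,R_t)$. For $p=(x,y)$ and $q=(x',y')$ in $R$ a direct computation gives
$$
\bigl| \,|pq|_s - |pq|_t\, \bigr| = \bigl|(t-s)\bigl(|xx'|-|yy'|\bigr)\bigr| = |t-s|\,\bigl|\,|xx'|-|yy'|\,\bigr|,
$$
whence $\dis\operatorname{id}=|t-s|\,\dis R$. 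Since $R$ is optimal, $\dis R=2\,d_{GH}(X,Y)$, and Proposition~\ref{prop3} yields the Lipschitz estimate
$$
d_{GH}(R_s,R_t)\le \tfrac12\dis\operatorname{id}=|t-s|\,d_{GH}(X,Y).
$$
In particular the map $t\mapsto R_t$ is continuous on $(0,1)$.

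Next I would treat the endpoints, where the underlying set jumps from $R$ to $X$ (resp. $Y$). Since $\pi_X\colon R\to X$ is surjective, the relation $S=\{\bigl((x,y),x\bigr):(x,y)\in R\}$ is a correspondence between $R_t$ and $X=R_0$; computing its distortion in the two metrics gives $\dis S=t\,\dis R$, so $d_{GH}(R_t,X)\le t\,d_{GH}(X,Y)\to 0$ as $t\to 0$. The symmetric argument with $\pi_Y$ gives $d_{GH}(R_t,Y)\le (1-t)\,d_{GH}(X,Y)\to 0$ as $t\to 1$. This shows the curve extends continuously to $[0,1]$ with the prescribed endpoint values $R_0=X$ and $R_1=Y$, and by Proposition~\ref{prop6} every $R_t$ indeed lies in $\cM$, so $t\mapsto R_t$ is a genuine curve in $\cM$ joining $X$ and $Y$.

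It remains to compute the length. For the upper bound I would take an arbitrary partition $0=t_0<t_1<\dots<t_n=1$ and telescope the Lipschitz estimate:
$$
\sum_{i=1}^{n} d_{GH}(R_{t_{i-1}},R_{t_i})\le \sum_{i=1}^{n}(t_i-t_{i-1})\,d_{GH}(X,Y)=d_{GH}(X,Y),
$$
so the length $L$ of the curve satisfies $L\le d_{GH}(X,Y)$. The lower bound is the general fact that the length of any curve is at least the distance between its endpoints, giving $L\ge d_{GH}(R_0,R_1)=d_{GH}(X,Y)$. Hence $L=d_{GH}(X,Y)$ and the curve is shortest. Finally, Proposition~\ref{prop4} guarantees that $\cR_{\operatorname{opt}}(X,Y)\cap\cR_c(X,Y)\ne\emptyset$ for all $X,Y\in\cM$, so such a shortest curve exists between any two points, which is exactly the assertion that the metric of $\cM$ is geodesic.

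The computations above are routine; the one point deserving care is the passage to the endpoints, where one must replace the diagonal correspondence by the projection correspondences $S$ and its $\pi_Y$-analogue and verify that these are indeed correspondences. I expect this boundary analysis, together with the clean identity $\dis R=2\,d_{GH}(X,Y)$ for optimal $R$, to be the crux of the argument; everything else follows mechanically from the Lipschitz bound.
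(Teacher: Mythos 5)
The paper gives no proof of this proposition---it is quoted from reference \cite{5}---and your argument is correct and follows essentially the standard route taken there: the diagonal correspondence yields the interior Lipschitz bound $d_{GH}(R_s,R_t)\le|t-s|\,d_{GH}(X,Y)$, the projection correspondences handle the endpoints, and the telescoping sum together with the trivial lower bound identifies the length. The appeals to Proposition~\ref{prop6} for $R_t\in\cM$ (using that $R$ is closed) and to Proposition~\ref{prop4} for existence are exactly the right supporting facts, so nothing is missing.
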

Let $X$ be a metric space and $\varepsilon > 0$. \emph{A covering number} $\operatorname{cov}(X,\varepsilon)$ is the minimum number of opened balls with radius $\varepsilon$ one needs in order to cover $X$.

The following proposition is called the \emph{precompact theorem of Gromov}.

For metric space $X$ we denote the diameter of space as $\diam X$.

\begin{prop}[\cite{3}]\label{thmG}
Let $C$ be a non-empty subset $\cM$. Then the following statements are equivalent.
\begin{enumerate}
\item There exists a number $D \geq 0$ and a function $N \colon (0, \infty)   \rightarrow \mathbb N$ such that for any $X \in C$ the inequalities $\diam X \leq D$ and $\operatorname{cov}(X, \varepsilon) \leq N(\varepsilon)$ are valid.
\item The family $C \subset \cM$ is precompact.
\end{enumerate}
\end{prop}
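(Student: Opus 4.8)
The plan is to prove the two implications separately, using the distortion characterization of the Gromov--Hausdorff distance (Proposition~\ref{prop3}) as the bridge between metric proximity in $\cM$ and the combinatorial data carried by $\varepsilon$-nets. Throughout I read ``precompact'' as ``totally bounded''; since $(\cM, d_{GH})$ is complete, total boundedness is equivalent to relative compactness, so it suffices to produce finite nets. I will repeatedly use two elementary consequences of Proposition~\ref{prop3}: if $R \in \cR(X,Y)$ then $\bl|\diam X - \diam Y\br| \le \dis R$, and whenever $X'$ is a subset of $X$ one has $d_{GH}(X,X') \le d_H(X,X')$ (take $X$ itself as a realization of the pair).

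First I would establish $(2)\Rightarrow(1)$. Assume $C$ is totally bounded. For the diameter, fix any finite $1$-net $\{Y_1,\dots,Y_m\}\subset C$; for $X\in C$ choosing $Y_j$ with $d_{GH}(X,Y_j)<1$ gives a correspondence of distortion $<2$, whence $\diam X \le \diam Y_j + 2 \le \max_j \diam Y_j + 2 =: D$. For the covering bound, fix $\varepsilon>0$ and pick a finite $\delta$-net $\{X_1,\dots,X_n\}\subset C$ with $\delta<\varepsilon/4$. Each $X_i$ is compact, hence $\operatorname{cov}(X_i,\varepsilon/2)=:m_i<\infty$; set $N(\varepsilon)=\max_i m_i$. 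Given $X\in C$, choose $X_i$ with $d_{GH}(X,X_i)<\delta$ and a correspondence $R$ with $\dis R<2\delta<\varepsilon/2$. If $\{y_1,\dots,y_{m_i}\}$ is an $(\varepsilon/2)$-net in $X_i$, select $x_j\in X$ with $(x_j,y_j)\in R$; then for any $x\in X$, taking $y$ with $(x,y)\in R$ and $y_j$ with $|yy_j|<\varepsilon/2$ yields $|xx_j|\le |yy_j|+\dis R<\varepsilon$. Thus $\{x_j\}$ is an $\varepsilon$-net in $X$ and $\operatorname{cov}(X,\varepsilon)\le m_i\le N(\varepsilon)$, as required.

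The direction $(1)\Rightarrow(2)$ is the substantive one. Assume the uniform bounds, fix $\rho>0$, and set $\varepsilon=\rho/3$. For each $X\in C$ choose an $\varepsilon$-net $S_X\subset X$ with $|S_X|=\operatorname{cov}(X,\varepsilon)\le N(\varepsilon)$; then $d_{GH}(X,S_X)\le d_H(X,S_X)\le\varepsilon$, and $S_X$ is a finite metric space with at most $N(\varepsilon)$ points and all distances at most $D$. The idea is that such spaces form a totally bounded family. For each $k\le N(\varepsilon)$ let $M_k\subset[0,D]^{k\times k}$ be the set of symmetric, zero-diagonal matrices satisfying the triangle inequality; this is closed and bounded, hence compact. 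The map sending a matrix to the isometry class in $\cM$ of the associated finite (pseudo)metric space, with zero-distance points identified, is $\tfrac12$-Lipschitz, because the index bijection is a correspondence whose distortion is $\max_{i,j}\bl|d_{ij}-d_{ij}'\br|$. Therefore each image $\mathcal F_k\subset\cM$ is compact, the finite union $\mathcal F=\bigcup_{k\le N(\varepsilon)}\mathcal F_k$ is totally bounded, and every $S_X$ lies in $\mathcal F$. A finite $\varepsilon$-net of $\mathcal F$ is then a $2\varepsilon$-net for $\{S_X:X\in C\}$ and hence a $3\varepsilon=\rho$-net for $C$. As $\rho$ was arbitrary, $C$ is totally bounded, hence precompact.

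I expect the main obstacle to lie in the $(1)\Rightarrow(2)$ direction: turning the uniform covering bound into a genuinely finite-dimensional, compact family of approximating finite metric spaces, and verifying the uniform Lipschitz estimate $d_{GH}\le\tfrac12\max_{i,j}\bl|d_{ij}-d_{ij}'\br|$ through the parametrization by distance matrices. The labeling ambiguity (isometric finite spaces with distinct matrices, and varying cardinality $k\le N(\varepsilon)$) must be handled, but it only inflates the family and is harmless for an upper bound on net cardinality; the genuine work is the compactness of each $M_k$ together with the continuity of passage to the isometry class.
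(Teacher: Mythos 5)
The paper does not prove this proposition at all --- it is quoted as Gromov's precompactness theorem with a citation to \cite{3} --- so the only benchmark is the standard proof in that reference, and your argument is that proof: both directions are correct, including the key step of packaging the finite $\varepsilon$-nets $S_X$ into the compact family parametrized by symmetric triangle-inequality matrices in $[0,D]^{k\times k}$, $k\le N(\varepsilon)$, via the $\tfrac12$-Lipschitz passage to isometry classes. The only external input you rely on beyond Proposition~\ref{prop3} is the completeness of $\cM$ (to identify total boundedness with precompactness), which is likewise standard and found in \cite{3}.
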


A class $X$ we call \emph{a simplex} if all its non-zero distances are equal. For any cardinal $n$, we denote by $\lambda \Delta_n$ the simplex with $n$ vertices and with distance $\lambda$ between any its distinct points. For $\lambda = 1$ the space $\lambda \Delta_n$ we denote by $\Delta_n$ for short.
\section{Main results}
\markright{\thesection.~ Main results}
If $X$ is a class with some distance function, then a \emph{metric segment} with ends $A, B\in X$ is a class $[A, B] = \bigr\{C \in X : |AC| + |CB| = |AB| \bigl\}$. If $|AB| > 0$, then the segment $[A,B]$ is called \emph{non-degenerate}.

A closed ball in $X$ with center $x_0\in X$ and radius $r > 0$ we denote by $B_r(x_0) = \bl\{x\in X : |xx_0|\leq r\br\}$.
\begin{thm}\label{mythm1}
Let a segment $[X,Y]$ be non-degenerate for $X, Y \in \cGH$. If there exists a metric space $Z\in \cGH$ lying in $[X,Y]$ such that $d_{GH}(X, Z) > 0$ and $d_{GH}(Z, Y) > 0$, then the segment $[X,Y]$ contains a space $Z^*\in \cGH$ with at least one isolated point, and the conditions $d_{GH}(X, Z^*) > 0$ and $d_{GH}(Z^*, Y) > 0$ hold.
\end{thm}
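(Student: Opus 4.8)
The plan is to start from the given intermediate space $Z$ and to adjoin to it a single new point $p$, producing $Z^{*}=Z\sqcup\{p\}$, in such a way that $p$ becomes an isolated point while $Z^{*}$ stays on the segment. If $Z$ already has an isolated point there is nothing to do, so I assume $Z$ is dense-in-itself. Fix a base point $z_{0}\in Z$ and a small $r>0$, and define the distances from the new point by $|pz|_{Z^{*}}=|z_{0}z|_{Z}+r$ for every $z\in Z$, leaving the metric of $Z$ unchanged. A direct check shows that this is an extended metric on $Z^{*}$: the function $z\mapsto|z_{0}z|+r$ is $1$-Lipschitz and satisfies $(|z_{0}z|+r)+(|z_{0}z'|+r)\ge|zz'|$, which are exactly the triangle inequalities involving $p$. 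Since $|pz|\ge r>0$ for all $z$, the point $p$ is isolated in $Z^{*}$, so $Z^{*}$ has the required isolated point.

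It remains to show $Z^{*}\in[X,Y]$ and that the two distances to the endpoints are positive. By the triangle inequality for $d_{GH}$ one always has $d_{GH}(X,Z^{*})+d_{GH}(Z^{*},Y)\ge d_{GH}(X,Y)$, so the whole problem is the reverse inequality. To bound the left-hand side I would build correspondences out of the data witnessing that $Z$ lies between $X$ and $Y$. Choose $R\in\cR(X,Z)$ with $\dis R$ close to $2\,d_{GH}(X,Z)$ and $S\in\cR(Z,Y)$ with $\dis S$ close to $2\,d_{GH}(Z,Y)$ (near-optimal correspondences, since optima need not be attained in $\cGH$), and pick $x_{0}\in X$, $y_{0}\in Y$ with $(x_{0},z_{0})\in R$ and $(z_{0},y_{0})\in S$. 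Extending by the pair attached to $p$, set $R^{*}=R\cup\{(x_{0},p)\}\in\cR(X,Z^{*})$ and $S^{*}=S\cup\{(p,y_{0})\}\in\cR(Z^{*},Y)$, and estimate the distortion of the new pairs against the old ones using $|pz|=|z_{0}z|+r$.

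The main obstacle is precisely here. Extending the two correspondences independently only yields $\dis R^{*}\le 2\,d_{GH}(X,Z)+r$ and $\dis S^{*}\le 2\,d_{GH}(Z,Y)+r$, hence the too weak bound $d_{GH}(X,Z^{*})+d_{GH}(Z^{*},Y)\le d_{GH}(X,Y)+r$, which does not place $Z^{*}$ on the segment. The point I must exploit is that $p$ is a copy of $z_{0}$ pushed outward, so passing from $Z$ to $Z^{*}$ slides the space \emph{along} the segment: the genuine distance to one endpoint increases by about $r/2$ while the distance to the other decreases by the same amount, the total length being preserved. Making this rigorous means producing, on the side where the distance must drop, a correspondence that does \emph{not} simply extend the old one but genuinely uses the betweenness relation $d_{GH}(X,Z)+d_{GH}(Z,Y)=d_{GH}(X,Y)$ to absorb the new point with slack; this balancing is the technical heart of the proof. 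Once the identity $d_{GH}(X,Z^{*})+d_{GH}(Z^{*},Y)=d_{GH}(X,Y)$ is established, positivity of both summands holds for all sufficiently small $r>0$, since each of $d_{GH}(X,Z^{*})$ and $d_{GH}(Z^{*},Y)$ differs from the positive numbers $d_{GH}(X,Z)$, resp.\ $d_{GH}(Z,Y)$, by at most $r/2$; this gives $d_{GH}(X,Z^{*})>0$ and $d_{GH}(Z^{*},Y)>0$ and completes the argument.
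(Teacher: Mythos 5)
Your construction and your write-up stop exactly where the real work begins, and you say so yourself: the ``balancing'' step that would turn the weak bound $d_{GH}(X,Z^{*})+d_{GH}(Z^{*},Y)\le d_{GH}(X,Y)+r$ into the required equality is announced as ``the technical heart of the proof'' but never carried out. That is a genuine gap, not a routine detail. Worse, the heuristic you propose for closing it --- that adjoining $p$ ``slides'' $Z$ along the segment, increasing one distance by about $r/2$ and decreasing the other by the same amount --- has no mechanism behind it and is not true in general for your metric $|pz|=|z_{0}z|+r$. For example, if $X$ is a one-point space then $d_{GH}(X,\cdot)$ is half the diameter, and with $z_{0}$ chosen so that $\sup_{z}|z_{0}z|=\diam Z$ your construction strictly increases the diameter, hence strictly increases $d_{GH}(X,Z^{*})$; there is then no reason whatsoever for $d_{GH}(Z^{*},Y)$ to decrease by exactly the compensating amount. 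A correspondence on the other side that ``absorbs the new point with slack'' would have to be produced, and nothing in the betweenness hypothesis hands you one.

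The paper's proof avoids this entirely by choosing a different metric on the added point: with $\delta\le\min\bl\{2d_{GH}(X,Z),\,2d_{GH}(Y,Z)\br\}$, the new point $z^{*}$ is placed at distance $\delta$ from every $z\in Z\cap B_{\delta}(z_{0})$ and at distance $|z_{0}z|$ from every other $z$; in other words, the distances from $z_{0}$ are truncated from below at $\delta$ rather than uniformly shifted by $r$. The payoff is that for \emph{any} correspondence $R\in\cR(X,Z)$, the extension $R^{*}=R\cup\bl(R^{-1}(z_{0})\times\{z^{*}\}\br)$ satisfies $\dis R^{*}\le\dis R$: the only new discrepancies are bounded by $\delta\le 2d_{GH}(X,Z)\le\dis R$, so the supremum does not grow. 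Hence $d_{GH}(X,Z^{*})\le d_{GH}(X,Z)$ and, by the symmetric choice of $\delta$, also $d_{GH}(Z^{*},Y)\le d_{GH}(Z,Y)$; summing and comparing with the triangle inequality $d_{GH}(X,Z^{*})+d_{GH}(Z^{*},Y)\ge d_{GH}(X,Y)$ forces both inequalities to be equalities, which simultaneously puts $Z^{*}$ on the segment and keeps both distances positive. If you want to salvage your argument, replace your additive formula by this truncated one (and note that the bound on $\delta$ must involve \emph{both} endpoint distances, since the same absorption argument is needed on each side); the rest of your outline then goes through.
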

\begin{proof}
Suppose that $Z$ does not contain isolated points, otherwise the theorem is proved. Fix an arbitrary point $z_0 \in Z$ and a finite number $\delta$ such that
$$
0 < \delta \leq \min\bl\{2d_{GH}(X, Z), 2d_{GH}(Y,Z)\br\}.
$$
We construct a metric space $Z^* = Z \cup\{z^*\}$ preserving the distances between points $z\in Z$ and assigning the distances from $z\in Z^*$ to $z^*$ as follows:
$$
|z^*z|_{Z^*} =
\begin{cases}
    \delta &\text{for $z \in Z \cap B_{\delta}(z_0)$},\\
    |z_0z|_Z &\text{for $z \in Z \setminus B_{\delta}(z_0)$},\\
    0 &\text{$z = z^*$}.\\
 \end{cases}
$$

Make sure that this function is a metric. It is sufficient to check the triangle inequality for $z_1, z_2, z^*$, where $z_1, z_2 \in Z$, because other axioms are obvious.

Consider different cases of the locations of points $z_1, z_2$.

Let $z_1, z_2 \in Z\cap B_{\delta}(z_0)$ then the triangle is isosceles and
$$
|z^*z_1|_{Z^*} \leq |z^*z_2|_{Z^*} + |z_2z_1|_{Z^*},
$$
$$
|z_1z_2|_{Z^*} = |z_1z_2|_{Z} \leq |z_1z_0|_{Z} + |z_0z_2|_{Z} \leq 2\delta = |z_1z^*|_{Z^*} + |z^*z_2|_{Z^*}.
$$
If $z_1, z_2 \in Z \setminus B_{\delta}(z_0)$, then the distances between $z_1, z_2, z^*$ are equal to the distances in $Z$ between $z_1, z_2, z_0$, respectively.
In the last case if $z_1 \in Z \setminus B_{\delta}(z_0)$ and $z_2 \in Z \cap B_{\delta}(z_0)$, then
$$
|z^*z_1|_{Z^*} = |z_0z_1|_{Z} \leq |z_0z_2|_{Z} + |z_2z_1|_{Z} \leq |z^*z_2|_{Z^*} + |z_2z_1|_{Z^*},
$$
$$
|z^*z_2|_{Z^*} = \delta \leq |z^*z_1|_{Z^*} \leq |z^*z_1|_{Z^*} + |z_1z_2|_{Z^*},\\
$$
$$
|z_1z_2|_{Z^*} = |z_1z_2|_{Z} \leq |z_1z_0|_{Z} + |z_0z_2|_{Z} \leq |z_1z^*|_{Z^*} + |z^*z_2|_{Z^*}.
$$
Therefore $Z^*\in \cB$.

The next step is to prove $d_{GH}(X, Z^*) \leq d_{GH}(X, Z)$.

Consider an arbitrary correspondence $R \in \cR(X, Z)$ and construct a new correspondence $R^* = R \cup \bigl(R^{-1}(z_0)\times \{z^*\}\bigr) \in \cR(X, Z^*)$, where $R^{-1}(z_0) = \bl\{x\in X: (x, z_0)\in R\br\}$. The distortion of $R^*$ consists of three parts:
$$
    r_1 = \sup\Bigl\{\bigl||xx'|-|zz'|\bigr| :(x, z), (x', z')\in R^*; z, z' \in Z\Bigr\},
$$
$$
    r_2 = \sup\Bigl\{\bigl||xx'| - |zz^*|\bigr|: |zz_0| > \delta; (x, z) \in R^*; x'\in R^{-1}(z_0) \Bigr\},
$$
$$
    r_3 = \sup\Bigl\{\bigl||xx'| - |zz^*|\bigr|: |zz_0| \leq \delta; (x, z) \in R^*; x'\in R^{-1}(z_0) \Bigr\},
$$
i.e, $\dis R^* = \max\{r_1, r_2, r_3\}$. Estimate $r_i$ from above.

By construction of correspondence $R^*$, we have $r_1 = \dis R$.

For $r_2$, the inequality $|zz_0| > \delta$ holds, then $|zz^*| = |zz_0|$ and
$$
   r_2 = \sup\Bigl\{\bigl||xx'|-|zz_0|\bigr|: |zz_0| > \delta; (x, z), (x',z_0) \in R \Bigr\} \leq \dis R.
$$
Consider any points $x,x',z$ from the definition of $r_3$.
If $0 \leq |xx'|-|zz^*|$, then
$$
|xx'|-|zz^*|\leq|xx'|-|zz_0|\leq \dis R.
$$ Otherwise,
$$
0 < |zz^*|-|xx'|\leq \delta \leq 2d_{GH}(X, Z) \leq \dis R.$$
Hence, $r_3 \leq \dis R$, and we get $\dis R^* \leq \dis R$.

Using this inequality and Proposition \ref{prop3}, we obtain
$
d_{GH}(X, Z^*) \leq d_{GH}(X, Z).
$

The inequality $d_{GH}(Z, Y) \geq d_{GH}(Z^*, Y )$ can be proved similarly.
Then
$$
d_{GH}(X, Y ) = d_{GH}(X, Z) + d_{GH}(Z, Y ) \geq d_{GH}(X, Z^*) + d_{GH}(Z^*, Y ) \geq d_{GH}(X, Y).
$$
The last inequality is the triangle inequality for the Gromov--Hausdorff distance.
Thus, $d_{GH}(X, Z^*) + d_{GH}(Z^*, Y ) = d_{GH}(X, Y)$, which means $Z^* \in [X, Y]$.
\end{proof}

\begin{thm}\label{mythm2}
Let a segment $[X,Y]$ be non-degenerate for $X, Y \in \cGH$, and there is a metric space $Z\in [X,Y]$ with at least one isolated point such that $d_{GH}(X, Z) > 0$, $d_{GH}(Z, Y) > 0$. Then for an arbitrary cardinal $m$ there is a metric space $W(m)\in\cGH$ lying in the segment $[X,Y]$, where $W(m)$ contains a simplex with $m$ vertices. If $Z \in \cM$ and the cardinal $m$ is a finite, then $W(m) \in \cM$.

\end{thm}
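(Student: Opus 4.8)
The plan is to imitate the construction in Theorem \ref{mythm1}, but instead of adjoining a single point I would blow up an isolated point of $Z$ into a whole simplex of the required cardinality. Let $z_0\in Z$ be an isolated point and set $\rho=\inf\{|z_0z|_Z:z\in Z,\ z\neq z_0\}$; since $z_0$ is isolated, $\rho>0$. Fix a set $M$ of cardinality $m$ and choose a positive number $\lambda$ with $0<\lambda\le\min\{2\rho,\,2d_{GH}(X,Z),\,2d_{GH}(Z,Y)\}$, which is possible because all three quantities are positive by hypothesis. Define $W(m)=Z\cup\{z_\alpha:\alpha\in M\}$, where the $z_\alpha$ are new points, keeping the metric of $Z$ unchanged and putting $|z_\alpha z|_{W(m)}=|z_0z|_Z$ for $z\in Z\setminus\{z_0\}$, $|z_\alpha z_0|_{W(m)}=\lambda$, and $|z_\alpha z_\beta|_{W(m)}=\lambda$ for $\alpha\neq\beta$. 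Then each $z_\alpha$ is an exact ``clone'' of $z_0$ apart from the mutual distances, and the set $\{z_\alpha:\alpha\in M\}$ is a simplex $\lambda\Delta_m$ with $m$ vertices contained in $W(m)$.

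First I would check that this distance function is a metric. As in Theorem \ref{mythm1} only the triangles involving at least one new point need attention, and they split into a few cases. The only inequality that uses the isolation of $z_0$ is $|z_\alpha z_\beta|\le|z_\alpha z|+|z z_\beta|$ for $z\in Z\setminus\{z_0\}$, which reads $\lambda\le 2|z_0z|_Z$ and holds because $|z_0z|_Z\ge\rho$ and $\lambda\le 2\rho$. All remaining triangles either lie inside $Z$, are equilateral with side $\lambda$, or coincide with triangles $z_0,z,z'$ of $Z$; thus $W(m)\in\cB$.

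Next I would show $d_{GH}(X,W(m))\le d_{GH}(X,Z)$, and symmetrically $d_{GH}(W(m),Y)\le d_{GH}(Z,Y)$. Given $R\in\cR(X,Z)$, extend it to $R^*=R\cup\bigl(R^{-1}(z_0)\times\{z_\alpha:\alpha\in M\}\bigr)\in\cR(X,W(m))$, matching every new point $z_\alpha$ to the whole preimage $R^{-1}(z_0)$. Estimating $\dis R^*$, pairs that stay in $Z$ contribute exactly $\dis R$; a pair of the form $(x,z_\alpha),(x',z')$ with $z'\in Z\setminus\{z_0\}$ contributes $\bigl||xx'|-|z_0z'|\bigr|\le\dis R$ because $(x,z_0),(x',z')\in R$; and a pair involving $z_\alpha$ together with $z_0$ or another $z_\beta$ contributes $\bigl||xx'|-\lambda\bigr|$ or $|xx'|$ with $x,x'\in R^{-1}(z_0)$, hence at most $\max\{\dis R,\lambda\}$. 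Therefore $\dis R^*\le\max\{\dis R,\lambda\}$, and taking the infimum over $R$ together with Proposition \ref{prop3} and the bound $\lambda\le 2d_{GH}(X,Z)$ gives $d_{GH}(X,W(m))\le d_{GH}(X,Z)$; the analogous argument on the $Y$ side uses $\lambda\le 2d_{GH}(Z,Y)$. The main obstacle is precisely this distortion bookkeeping for the new inter-simplex pairs: it is what forces $\lambda$ to be at most twice each endpoint distance, so that enlarging $z_0$ into a simplex does not increase the distances to $X$ and $Y$. Once both estimates are in hand, the triangle-inequality squeeze of Theorem \ref{mythm1}, namely $d_{GH}(X,Y)=d_{GH}(X,Z)+d_{GH}(Z,Y)\ge d_{GH}(X,W(m))+d_{GH}(W(m),Y)\ge d_{GH}(X,Y)$, yields $W(m)\in[X,Y]$.

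Finally, for the compactness claim, suppose $Z\in\cM$ and $m$ is finite. Then $W(m)$ is obtained from the compact space $Z$ by adjoining finitely many points at finite distances, so it is again totally bounded and complete, hence $W(m)\in\cM$. For infinite $m$ the simplex $\lambda\Delta_m\subset W(m)$ is not totally bounded, so $W(m)\notin\cM$, consistent with the statement; this is also the source of the non-compactness of the segment restricted to $\cM$ via Proposition \ref{thmG}.
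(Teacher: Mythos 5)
Your proposal is correct and follows essentially the same route as the paper: the same constraint on the scale parameter (at most twice each endpoint distance and twice the isolation radius), the same extension of a correspondence by matching every simplex point to the preimage of the isolated point, the same distortion bookkeeping, and the same triangle-inequality squeeze. The only cosmetic difference is that you keep the isolated point $z_0$ and adjoin $m$ new points around it, whereas the paper deletes $z^*$ and replaces it by the simplex $\mu\Delta_m$; this changes nothing in the argument.
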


\begin{proof}
Let $z^* \in Z$ be an isolated point.
Fix a finite number $\mu$ such that
$$
0 <\mu < 2\min\bigl\{d_{GH}(X, Z), d_{GH}(Z, Y ), S(z^*)\bigr\},
$$ where $S(z^*) = \inf \bigl\{|z^*z|: z\in Z, z \neq z^*\bigr\} > 0$. Let $m$ be an arbitrary cardinal number. On the set
$$W = W(\mu, m) = \mu\Delta_m \cup Z \setminus \{z^*\}$$ we define the distance function as follows.
 Let $w_1, w_2 \in W$, then
\begin{equation*}
|w_1w_2|_W =
 \begin{cases}
    |w_1w_2|_Z &\text{for $w_1, w_2 \in Z \backslash \{ z^*\},$}\\
    |w_iz^*|_Z &\text{for $w_i \in Z, w_{3-i} \in \mu\Delta_m, i\in \{1, 2\}$},\\
    \mu &\text{for $w_1, w_2 \in \mu\Delta_m, w_1 \neq w_2 ,$}\\
    0 &\text{otherwise.}
 \end{cases}
\end{equation*}

This function is a metric. The properties of positive definiteness and symmetricity are obvious. We need to check the triangle inequality for different points lying in $W$. If all three points $w_1, w_2, w_3$ lie in $Z \setminus \{z^*\}$, or if $w_i \in \mu\Delta_m$, where $i \in \{1, 2, 3\}$, then the triangle inequality holds . If $w_1, w_2 \in Z \backslash \{z^*\}$,
and $w_3 \in \mu\Delta_m$, then the distances between this points are equal to the distances between
$w_1, w_2, z^* \in Z$, respectively, so the triangle inequality is valid in this case. If $w_1 \in Z \backslash \{z^*\}$, and $w_2, w_3 \in \mu\Delta_m$,
then
$$
|w_2w_3|_W = \mu < 2S(z^*) \leq 2|z^*w_1|_Z = |w_2w_1|_W + |w_1w_3|_W,
$$
$$
|w_1w_2|_W = |w_1z^*|_Z < |w_1z^*|_Z + \mu = |w_1w_3|_W+|w_3w_2|_W.
$$
Similarly for $|w_1w_3|_W$.
Therefore, $W$ is a metric space.

Now we prove that  $d_{GH}(X, W) \leq d_{GH}(X, Z)$.

Consider an arbitrary correspondence $R \in \cR(X, Z)$ and construct a new correspondence $V \in \cR(X, W)$ as follows.
Put $(x, w) \in V$ if and only if $w \in Z \sm \{z^*\}$ and $(x, w) \in R$ or $w \in \mu\Delta_m$ and $(x, z^*) \in R$.

Calculate the distortion of $V$:
$$
\dis V = \sup\Bigl\{\bigl||xx^\prime| - |ww^\prime|\bigr|:(x, w), (x^\prime, w^\prime)\in V\Bigr\}.
$$
The distortion of $V$ consists of two parts:
$$
v_1 = \sup\Bigl\{\bigl||xx^\prime| - |ww^\prime|\bigr|:(x, w), (x^\prime, w^\prime)\in V; w\in Z\setminus\{z^*\}, w^\prime \in W\Bigr\},
$$
$$
v_2 = \sup\Bigl\{\bigl||xx^\prime| - |ww^\prime|\bigr|:(x, w), (x^\prime, w^\prime)\in V; w,  w^\prime \in \mu\Delta_m\Bigr\},
$$
i.e., $\dis V = \max\{v_1, v_2\}$.
For any pairs $(x, w), (x', w') \in V$ from the $v_1$, there are pairs $(x, z), (x', z') \in R$ with $z, z' \in Z$ and
$|ww'|_W = |zz'|_Z$, thus, $v_1 \leq \dis R$.
Estimate the value $v_2$ from above. Rewrite $v_2$ in equivalent form taking into account that $|ww'|_W = \mu$ for $w, w' \in \mu\Delta_m$, and therefore
$$
v_2 = \sup\Bigl\{\bigl||xx^\prime| - \mu\br|:(x, z^*), (x', z^*) \in R\Bigr\}.
$$
Due to the restriction on $\mu$, we have
$$
\mu < 2d_{GH}(X, Z) \leq \dis R.
$$
In addition, for $x, x' \in X$ such that $(x, z^*), (x', z^*) \in R$ we have $|xx'| \leq \dis R$.
Since the absolute value of difference between $\mu$ and $|xx'|$ is less or equal than $\dis R$, the value $v_2$ is less or equal than $\dis R$.

So we proved $\dis V = \max\{v_1, v_2\}\leq \dis R$.

According to this inequality and Proposition \ref{prop3}, we have $d_{GH}(X, W) \leq d_{GH}(X, Z)$.

Similarly, we obtain the inequality $d_{GH}(Z, Y) \geq d_{GH}(W, Y ).$

Then
$$
d_{GH}(X, Y ) = d_{GH}(X, Z) + d_{GH}(Z, Y ) \geq d_{GH}(X, W) + d_{GH}(W, Y ) \ge d_{GH}(X, Y),
$$
where the last inequality is the triangle inequality for the Gromov-Hausdorff distance.

Thus, $d_{GH}(X, W) + d_{GH}(W, Y) = d_{GH}(X, Y)$, which means $W(\mu, m) \in [X, Y]$ for any cardinal $m$.

If $Z$ is a compact metric space, and $m$ is a finite number, then the metric space $W(m)$ is a compact metric, because when constructing $W(m)$, we add only a finite number of points to compact metric space.




\end{proof}
\begin{cor}
 If for space $X, Y \in \cGH$ the metric segment $[X, Y]$  contains a metric space $Z \in \cGH$ such that $d_{GH}(X,Z) > 0$ and $d_{GH}(Y,Z) > 0$, then the metric segment $[X,Y]$ is a proper class.
\end{cor}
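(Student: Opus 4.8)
The plan is to chain the two preceding theorems together and then finish with a counting argument based on the size restriction built into NBG. First I would apply Theorem \ref{mythm1} to the given space $Z$: since by hypothesis $d_{GH}(X,Z) > 0$ and $d_{GH}(Y,Z) > 0$, that theorem produces a space $Z^* \in [X,Y]$ having at least one isolated point and still satisfying $d_{GH}(X,Z^*) > 0$ and $d_{GH}(Z^*,Y) > 0$. These are precisely the hypotheses needed to invoke the second theorem.

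Next I would feed $Z^*$ into Theorem \ref{mythm2}. For every cardinal $m$ it yields a metric space $W(m) \in [X,Y]$ containing a simplex $\mu\Delta_m$ with $m$ vertices. In particular the underlying set of $W(m)$ has cardinality at least $m$, so $[X,Y]$ contains isometry classes whose representatives have arbitrarily large cardinality. Note that for this conclusion I do not even need the spaces $W(m)$ to be pairwise non-isometric; it is enough that for each cardinal $m$ at least one element of $[X,Y]$ has cardinality $\ge m$.

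The final step, which I expect to carry the only genuinely new content, is to turn ``arbitrarily large cardinality'' into ``proper class''. I would consider the class map $\kappa$ that sends each isometry class in $[X,Y]$ to the cardinality of any one of its representatives; this is well defined because isometric spaces are equinumerous. By the previous step the image $\kappa([X,Y])$ contains a cardinal $\ge m$ for every cardinal $m$, so it is an unbounded, hence proper, subclass of the cardinals (every \emph{set} of cardinals is bounded above by its supremum, whereas our image exceeds every cardinal). Since in NBG the image of a set under a class function is again a set (the Axiom of Replacement), a class whose image under $\kappa$ is proper cannot itself be a set. Therefore $[X,Y]$ is a proper class, as claimed.

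The main obstacle is purely foundational rather than geometric: one must check that $\kappa$ is legitimately a class function in the NBG sense and that Replacement is being applied correctly to $[X,Y]$ as a subclass of the proper class $\cGH$. Once this bookkeeping is in place, all of the metric-space construction is already supplied by Theorems \ref{mythm1} and \ref{mythm2}, and nothing further needs to be computed.
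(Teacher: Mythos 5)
Your proposal is correct and follows essentially the same route as the paper: chain Theorem \ref{mythm1} and Theorem \ref{mythm2} to obtain spaces $W(m)\in[X,Y]$ of arbitrarily large cardinality, then conclude via a cardinality map that $[X,Y]$ cannot be a set. The paper phrases this as a surjection from $[X,Y]$ onto the proper class of all cardinals and says the segment ``is not less'' than a proper class; your explicit appeal to Replacement is just a more careful spelling-out of that same final step.
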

\begin{proof}
According to Theorem \ref{mythm1} and Theorem \ref{mythm2}, the metric segment $[X,Y]$ can be surjectively mapped to the proper class of all cardinality sets. For instance, any space $W(m)$ goes to a set with cardinal number $m$, and other elements of the segment go to single-point set. Therefore, the segment ``is not less'' than a proper class, so the segment is a proper class.
\end{proof}

\begin{cor}
For two different metric space $X,Y \in \cM$, the segment $[X,Y]$ is not compact.
\end{cor}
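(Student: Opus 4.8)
The plan is to show that the restriction $[X,Y]\cap\cM$ fails to be precompact, which by Gromov's precompactness theorem (Proposition \ref{thmG}) suffices, since any compact subset of $\cM$ is automatically precompact. Concretely, I would exhibit inside the segment a sequence of compact spaces whose covering numbers at one fixed scale grow without bound, thereby violating the uniform covering bound in Proposition \ref{thmG}.

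First I would locate an interior point of the segment. Since $X\neq Y$ in $\cM$ we have $d_{GH}(X,Y)>0$, so $[X,Y]$ is non-degenerate. By Proposition \ref{prop7} the metric of $\cM$ is geodesic, so a shortest curve from $X$ to $Y$ exists; taking a midpoint $Z_0$ of this curve, i.e.\ a point with $d_{GH}(X,Z_0)=d_{GH}(Z_0,Y)=\tfrac12 d_{GH}(X,Y)$, I obtain $Z_0\in[X,Y]\cap\cM$ with $d_{GH}(X,Z_0)>0$ and $d_{GH}(Z_0,Y)>0$.

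Next I would feed $Z_0$ into Theorem \ref{mythm1} to get a space $Z^*\in[X,Y]$ that has an isolated point and still lies at positive distance from both $X$ and $Y$. Here I would observe that the construction in Theorem \ref{mythm1} adjoins only a single point to $Z_0$, so that compactness is preserved: if $Z_0\in\cM$ then $Z^*\in\cM$ as well. Now Theorem \ref{mythm2} applies to $Z^*$. Fixing the associated constant $\mu>0$ once and for all, for every finite cardinal $n$ it produces a space $W(n)=W(\mu,n)\in[X,Y]\cap\cM$ containing the simplex $\mu\Delta_n$; the essential feature is that $\mu$ is chosen before $n$ and is therefore uniform across the whole family $\{W(n):n\in\N\}$.

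Finally I would estimate covering numbers. The $n$ vertices of $\mu\Delta_n\subset W(n)$ are pairwise at distance $\mu$, so any ball of radius $\varepsilon<\mu/2$ contains at most one of them, whence $\operatorname{cov}(W(n),\varepsilon)\geq n$. Thus $\{W(n):n\in\N\}\subset[X,Y]\cap\cM$ admits no uniform covering function $N(\varepsilon)$, so by Proposition \ref{thmG} this family is not precompact, and hence neither is the larger set $[X,Y]\cap\cM$, which therefore cannot be compact. The proof is not computationally hard, as it rests entirely on Theorems \ref{mythm1} and \ref{mythm2}; the points requiring genuine care — and thus the main obstacle — are the bookkeeping that produces a \emph{compact} space with an isolated point at positive distance from the ends, the choice of a \emph{single} $\mu$ valid for all $n$, and the correct contrapositive use of Gromov's criterion (non-uniform covering numbers $\Rightarrow$ non-precompact $\Rightarrow$ non-compact).
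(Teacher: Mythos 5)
Your proposal is correct and follows essentially the same route as the paper: pick an interior point of the segment via Proposition \ref{prop7}, pass through Theorems \ref{mythm1} and \ref{mythm2} to obtain compact spaces $W(n)\in[X,Y]$ containing $\mu\Delta_n$ with a single $\mu$, and then violate the uniform covering bound of Proposition \ref{thmG}. You are in fact slightly more careful than the paper in explicitly noting that the one-point extension of Theorem \ref{mythm1} preserves compactness, a step the published proof leaves implicit but which is needed to keep the family inside $\cM$.
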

\begin{proof}
By Proposition \ref{prop7}, the set $[X,Y ] \setminus \{X,Y \}$ is non-empty, thus it contains some $Z$. Space $\cM$ is a metric space thus $d_{GH}(X,Z) > 0$ and $d_{GH}(Z,Y)>0$. According to Theorem \ref{mythm1} and Theorem \ref{mythm2} there exists a compact metric space $W(m)\in [X,Y]$ for any positive integer $m$ and $W(m)$ contains an $m$-vertices simplex. Let the distance between differen points in the simplex be $\mu > 0$.
For any $0< \varepsilon < \frac \mu 2$ the covering number $\operatorname{cov}\bl(W(m), \varepsilon\br)$ is not less than $m$, because each ball with radius $\varepsilon$ can cover no more than one point from $\mu\Delta_m \subset W(m)$.
Therefore, for $\bl\{W( m)\br\}^{\infty}_{m=1} \subset [X, Y]$ there is no function $N \colon (0, \infty) \rightarrow \mathbb N$ such that for any $m \in \mathbb N$ the inequality $\operatorname{cov}\bl(W(m), \varepsilon\br) \leq N(\varepsilon)$ is valid. So, according to Proposition \ref{thmG}, the segment [X, Y] is not precompact and, hence, a compact set.
\end{proof}



\end{document}